\newcommand\A{\mathcal{A}}
\newcommand\F{\mathcal F}
\newcommand\I{\mathbbm{1}}
\renewcommand{\P}{\mathbb{P}}
\newcommand{\M}{\mathcal M}
\newcommand{\D}{\mathcal D}
\renewcommand{\rho}{\varrho}
\newcommand\eps{\varepsilon}
\newcommand\T{0\le t\le T}
\newcommand\1{0\le t\le 1}
\newcommand\p{\mathbb{P}}
\newcommand\N{\mathbb{N}}
\newcommand\E{\mathbb{E}}
\newcommand\RR{\mathbb{R}}
\newcommand\Lone{{L^1 (\Omega)}}
\newcommand\Ltwo{{L^2 (\Omega)}}
\newtheorem{theorem}{Theorem}[section]
\newtheorem{lemma}[theorem]{Lemma}
\theoremstyle{definition}
\DeclareMathOperator{\conv}{conv}
\title[A short Proof of the Doob-Meyer Theorem]{A short Proof of the Doob-Meyer Theorem}
\begin{document}
\author {Mathias Beiglb\"ock, Walter Schachermayer, Bezirgen Veliyev}
\thanks{The first author acknowledges support from the Austrian Science
Fund under grant  P21209. The second author acknowledges support from the Austrian Science Fund under grant P19456, from
the Vienna Science and Technology Fund under grant MA13, and from the ERC Advanced Grant.
 The third author acknowledges  support from the Austrian Science Fund under grant P19456.}

\maketitle
\begin{abstract}
Every submartingale $S$ of class $D$ has a unique Doob-Meyer decomposition $S=M+A$, where $M$ is a martingale and $A$ is a predictable increasing process starting at $0$.

We provide a short and elementary prove of the Doob-Meyer decomposition theorem. Several previously known  arguments are included to keep the paper self-contained.

\end{abstract}
\section{Introduction}
Throughout this article we fix a probability space $(\Omega, \F, \p)$ and  a right-continuous complete filtration $(\F_t)_{\T}$.


 An adapted process $(S_t)_{\T}$ is of class $D$ if the family of random variables $ S_{\tau}  $ where $\tau $ ranges through all stopping times is uniformly integrable (\cite{Meye62}).

 The purpose of this paper is to give a short and elementary proof of the following
\begin{theorem}[Doob-Meyer]\label{DoobMeyer}
  Let $S=(S_t)_{\T}$ be a c\`{a}dl\`{a}g submartingale of class $D$. Then, $S$ can be written in a unique way in the form 
\begin{align}\label{TheDecomposition}S=M+A\end{align} 
where $M$ is a martingale and $A$ is a predictable increasing process starting at $0$. 
\end{theorem}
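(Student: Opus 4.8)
The plan is to dispose of uniqueness in a line and then construct $A$ by discretising time and invoking a weak-compactness argument, the one genuinely delicate point being the predictability of the limit. Uniqueness is classical: if $S=M+A=M'+A'$ are two such decompositions then $N:=M-M'=A'-A$ is at once a martingale and a predictable process of integrable variation with $N_0=0$; being predictable it cannot jump where a martingale jumps (at a predictable time $\sigma$ one would have $\Delta N_\sigma=\E[\Delta N_\sigma\mid\F_{\sigma-}]=0$), so $N$ is continuous, and a continuous martingale of finite variation starting at $0$ vanishes. Hence $A=A'$.

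For existence I would sample $S$ along the dyadic grids $\pi_n=\{kT2^{-n}:0\le k\le 2^n\}$ and use the elementary discrete-time Doob decomposition: with $\Delta^n_k:=\E[S_{t^n_k}-S_{t^n_{k-1}}\mid\F_{t^n_{k-1}}]\ge 0$, let $A^n$ be the increasing process determined by $A^n_{t^n_k}=\Delta^n_1+\dots+\Delta^n_k$, interpolated so as to be left-continuous (hence predictable), and put $M^n:=S-A^n$, a martingale on $\pi_n$. The first real step is the uniform integrability of $(A^n_T)_n$. It is bounded in $L^1$, since $\E[A^n_T]=\E[S_T]-\E[S_0]$ by telescoping. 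For uniform integrability one brings in the class $D$ hypothesis: for $\lambda>0$ the time $\tau^n_\lambda:=\min\{t^n_{k-1}:A^n_{t^n_k}>\lambda\}$ is a stopping time exactly because $A^n$ is $\pi_n$-predictable, one checks $\{A^n_T>\lambda\}=\{\tau^n_\lambda<T\}$ and $A^n_{\tau^n_\lambda}\le\lambda$, and optional sampling of $M^n$ at the grid-valued time $\tau^n_\lambda$ gives
\[\E\big[(A^n_T-\lambda)\I_{\{A^n_T>\lambda\}}\big]\le\E\big[(S_T-S_{\tau^n_\lambda})\I_{\{\tau^n_\lambda<T\}}\big].\]
As $\P[\tau^n_\lambda<T]\le\lambda^{-1}\E[A^n_T]$ is small uniformly in $n$ for $\lambda$ large, and the class $D$ family $\{S_\sigma:\sigma\text{ a stopping time}\}$ is uniformly integrable and hence uniformly $\P$-continuous, the right-hand side is small uniformly in $n$; this yields the uniform integrability.

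By the Dunford--Pettis theorem and Koml\'{o}s' subsequence theorem (or Mazur's lemma) I may pass to forward convex combinations $\bar A^n\in\conv\{A^n,A^{n+1},\dots\}$ with $\bar A^n_T\to A_T$ in $L^1$ and $\P$-a.s., where $A_T\in L^1$ and $\E[A_T]=\E[S_T]-\E[S_0]$. Define $M$ to be the c\`{a}dl\`{a}g version of the martingale $t\mapsto\E[S_T-A_T\mid\F_t]$ and set $A:=S-M$, which is c\`{a}dl\`{a}g and adapted. For every dyadic $t$ the discrete martingale relation gives $\bar A^n_t=S_t-\E[S_T-\bar A^n_T\mid\F_t]$, so by continuity of conditional expectation $\bar A^n_t\to A_t$ in $L^1$, and hence $\P$-a.s.\ along a subsequence (diagonalise over the countable set of dyadics); since each $\bar A^n$ is increasing on the dyadics, $A$ is an increasing process, and $A_0=0$ because the $\bar A^n$ vanish at $0$.

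It remains to show that $A$ is predictable, and this is the heart of the argument. The idea is to carry the ``naturality'' of the discrete decompositions to the limit: since $\Delta^n_k$ is $\F_{t^n_{k-1}}$-measurable, for every bounded c\`{a}dl\`{a}g martingale $N$,
\[\E[N_TA^n_T]=\E\Big[\sum_k N_{t^n_{k-1}}\Delta^n_k\Big]=\E\Big[\int_{(0,T]}N_{s^-_n}\,dA^n_s\Big],\]
with $s^-_n$ the left endpoint of the $\pi_n$-interval containing $s$. Passing to convex combinations and letting $n\to\i$ --- using $\bar A^n_T\to A_T$ in $L^1$ on the left, and on the right the uniform integrability of the $\bar A^n$ together with $N_{s^-_n}\to N_{s-}$ (established first for continuous $N$, then in general) --- one obtains $\E[N_TA_T]=\E\big[\int_{(0,T]}N_{s-}\,dA_s\big]$ for every bounded martingale $N$; that is, $A$ is \emph{natural}. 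One then invokes the elementary characterisation, included to keep the paper self-contained, that an integrable c\`{a}dl\`{a}g increasing process which is natural is predictable. I expect the two real obstacles to be: (i) the passage to the limit inside the integral against $N$ --- here the uniform integrability obtained above, and not merely weak $L^1$-convergence at the terminal time, is indispensable, and the jumps of $N$ require some care --- and (ii) the proof that ``natural'' implies ``predictable'', which has to be carried through without appealing to the predictable section theorem.
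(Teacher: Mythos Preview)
Your construction up through the increasing c\`adl\`ag limit $A$ matches the paper closely: discrete Doob decompositions on dyadic grids, uniform integrability from the class $D$ hypothesis, a Koml\'os-type extraction of forward convex combinations, and $L^1$ convergence at dyadic times. The divergence comes at the predictability step, where you take Rao's classical route --- pass the discrete naturality to the limit and then invoke Dol\'eans-Dade's characterisation ``natural $\Rightarrow$ predictable'' --- which is exactly what the paper sets out to avoid. Instead the paper extends each $A^n$ to a left-continuous (hence predictable) step process on $[0,1]$ and shows \emph{directly} that $A_t(\omega)=\limsup_n \A^n_t(\omega)$ for a.e.\ $\omega$ and every $t\in[0,1]$: at continuity points of $t\mapsto A_t(\omega)$ this is automatic from convergence of increasing functions on the dense set $\D$; the jump times are exhausted by countably many stopping times $\tau$, and for each one checks $\E[A^n_\tau]=\E[S_{\sigma_n}]-\E[M_0]\to\E[S_\tau]-\E[M_0]=\E[A_\tau]$ using class $D$ and the grid approximants $\sigma_n\downarrow\tau$, which together with the pointwise bound $\limsup_n\A^n_\tau\le A_\tau$ and Fatou forces a.s.\ equality. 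This buys a genuinely short, self-contained argument --- no Dunford--Pettis (the required Koml\'os lemma is a bare-hands Hilbert-space minimisation plus truncation) and no Dol\'eans-Dade. What you yourself flag as obstacle (ii), proving ``natural $\Rightarrow$ predictable'' without the predictable section theorem, is precisely the difficulty the paper's argument never has to confront; your outline is correct and would reproduce the Rao--Meyer proof, but the step you defer is the one the paper eliminates.
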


Doob \cite{Doob53} noticed that in discrete time an integrable process $S=(S_n)_{n=1}^\infty$ can be uniquely represented as the sum of a martingale $M$ and a predictable process $A$ starting at $0$; in addition, the process $A$ is increasing iff $S$ is a submartingale. The continuous time analogue, Theorem \ref{DoobMeyer}, goes back to Meyer \cite{Meye62,Meye63}, who introduced the class $D$ and proved that  every submartingale $S=(S_t)_{\T}$ can be decomposed in the form \eqref{TheDecomposition}, where $M$ is a martingale and $A$ is a \emph{natural} process. The modern formulation is due to Dol{\'e}ans-Dade \cite{Dole67, Dole68} who obtained that an increasing process is  natural iff it is  predictable. Further proofs of Theorem \ref{DoobMeyer} were given by Rao \cite{Rao69}, Bass \cite{Bass96} and Jakubowski \cite{Jaku05}. 

 Rao works with the
$\sigma(L^1, L^{\infty})$-topology and applies the Dunford-Pettis compactness criterion to obtain the desired  continuous time decomposition as a weak-$L^1$ limit from discrete approximations. To obtain that $A$ is predictable one then invokes the theorem of Dol{\'e}ans-Dade.

Bass gives a more elementary proof based on the dichotomy between predictable and totally inaccessible stopping times. 

Jakubowski proceeds as Rao, but notices that predictablity of the process $A$ can also be obtained through an application of Komlos' Lemma \cite{Koml67}.

The proof presented subsequently combines ideas from \cite{Jaku05} and \cite{BeSV10} to construct the continuous time decomposition 
using a suitable Komlos-type lemma. 
\section{Proof of Theorem \ref{DoobMeyer}}
The proof of uniqueness is standard and we have nothing to add here; see for instance \cite[Lemma 25.11]{Kall02}.

For the remainder of this article we work under the assumptions of Theorem \ref{DoobMeyer} and fix $T=1$ for simplicity.


Denote by $\D_n$ and $\D$ the set of  $n$-th resp.\ all dyadic numbers $j/2^n$ in the interval $[0,1].$ 
For each $n$, we consider the discrete time Doob decomposition of the sampled process $S^n=(S_t)_{t\in\D_n}$, that is, we define $A^n, M^n$ by $A^n_0:=0$,
\begin{align} A^n_{t}-A^n_{t-1/2^n}&:=\E[S_{t}-S_{t-1/2^n}|\F_{t-1/2^n}] \ \mbox{and} 
\\ M^n_t& := S_t-A^n_t
\end{align}
so that $(M^n_{t})_{t\in\D_n}$ is a martingale and $(A^n_{t})_{t\in\D_n}$ is  predictable with respect to $(\F_{t})_{t\in\D_n}$. 

The idea of the proof is, of course, to obtain the continuous time decomposition \eqref{TheDecomposition} as a limit, or rather, as an accumulation point of the processes $M^n,A^n,n\geq 1$. 

Clearly, in infinite dimensional spaces a (bounded) sequence need not have a convergent subsequence. 
As a substitute for the Bolzano-Weierstrass Theorem we establish the Komlos-type Lemma \ref{EasyKomlos} in Section \ref{KLS}.

In order to apply this auxiliary result, we require that the sequence $(M_1^n)_{n\geq 1}$ is uniformly integrable. This follows from the class $D$ assumption as shown by \cite{Rao69}. To keep the paper self-contained, we provide a proof in Section \ref{UIS}.


Finally, in Section \ref{LIS}, we obtain the desired decomposition by passing to a limit of the discrete time versions.  As the Komlos-approach guarantees convergence in a strong sense, predictability of the process $A$ follows rather directly from the predictability of the approximating processes.
This idea is taken from \cite{Jaku05}.
\subsection{Komlos' Lemma}\label{KLS}


Following Komlos \cite{Koml67}\footnote{Indeed,  \cite{Koml67} considers Cesaro sums along subsequences rather then arbitrary convex combinations. But for our purposes, the more modest conclusion of Lemma \ref{EasyKomlos} is sufficient.}, 
it is sometimes possible to obtain an accumulation point of a bounded sequence in an infinite dimensional space if appropriate convex combinations are taken into account. 

\medskip

A particularly simple result of this kind holds true if $(f_n)_{n\geq1} $ is a bounded sequence in a Hilbert space. In this case $$\textstyle{A=\sup_{n\geq1} \inf\{\|g\|_2:g\in\conv\{f_n, f_{n+1},\ldots\}\}}$$ is finite and 
 for each $n$ we  may pick some $g_n\in \conv\{f_n, f_{n+1},\ldots\}$ such that $ \|g_n\|_2\leq A+1/n$. If $n$ is sufficiently large with respect to $\eps>0$, then $\|(g_k+g_m)/2\|_2>A-\eps$ for all $m,k\geq n$ and hence
$$ \|g_k-g_m\|_2^2=2 \|g_k\|_2^2+2\|g_m\|_2^2- \|g_k+g_m\|_2^2
\leq 4(A+\tfrac1n)^2-4(A-\eps)^2.$$ By completeness, $(g_n)_{n\geq1}$  converges in $\|.\|_2$.



\medskip

By a straight forward truncation procedure this Hilbertian Komlos-Lemma yields an $L^1$-version which we will need subsequently.\footnote{Lemma \ref{EasyKomlos} is also a trivial consequence of Komlos' original result \cite{Koml67} or other related results that have been established through the years. Cf.\ \cite[Chapter 5.2]{KaSa09} for an overview.}
\begin{lemma}\label{EasyKomlos}
 Let $(f_{n})_{n\geq1}$ be a uniformly integrable  sequence of functions on a probability space $(\Omega, \F,\P).$ 
 Then there exist functions  $g_n\in\conv(f_{n},f_{n+1},\dots)$ such that $(g_n)_{n\geq 1}$ 
 converges in $\|.\|_\Lone.$
\end{lemma}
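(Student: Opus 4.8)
The plan is to deduce the $L^1$ statement from the Hilbert-space case recalled just above, using truncation together with a diagonal argument. Write $u^{(K)} := (u \wedge K) \vee (-K)$ for the truncation of a function $u$ at level $K > 0$. The elementary pointwise identity $|u - u^{(K)}| = (|u| - K)^+$, combined with the convexity of $t \mapsto (t - K)^+$, shows that for any finite convex combination $u = \sum_i \lambda_i f_i$ one has $\|u - u^{(K)}\|_{\Lone} \le \sup_i \E[(|f_i| - K)^+]$. By uniform integrability of $(f_n)$ the right-hand side tends to $0$ as $K \to \infty$, so I may fix levels $K_1 < K_2 < \dots$ with $\|u - u^{(K_j)}\|_{\Lone} \le 2^{-j}$ \emph{uniformly} over all convex combinations $u$ of the $f_n$.

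The core step is the following. Suppose $(u_n)_{n \ge 1}$ is a sequence in which each $u_n$ lies in $\conv(f_n, f_{n+1}, \dots)$, and fix a level $K$. The truncated sequence $(u_n^{(K)})_{n \ge 1}$ is bounded in $L^\infty(\Omega) \subset L^2(\Omega)$, so the Hilbertian Komlos argument recalled above produces convex weights $(\lambda_i^{(n)})_i$, supported on $\{i : i \ge n\}$, such that $v_n := \sum_i \lambda_i^{(n)} u_i^{(K)}$ converges in $\|.\|_{L^2(\Omega)}$, hence in $\|.\|_{\Lone}$. Using the \emph{same} weights on the untruncated functions, I set $u'_n := \sum_i \lambda_i^{(n)} u_i$; then $u'_n \in \conv(f_n, f_{n+1}, \dots)$ again, and $\|u'_n - v_n\|_{\Lone} \le \sup_{i \ge n} \|u_i - u_i^{(K)}\|_{\Lone}$. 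Consequently the $L^1$-Cauchy property of $(v_n)$ passes to $(u'_n)$ up to an additive error $2\sup_i \|u_i - u_i^{(K)}\|_{\Lone}$.

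I then iterate. Set $f_n^{(0)} := f_n$, and for $j \ge 1$ apply the core step at level $K_j$ to the sequence $(f_n^{(j-1)})_n$, obtaining $(f_n^{(j)})_n$ with each $f_n^{(j)} \in \conv(f_n, f_{n+1}, \dots)$ and, by the previous paragraph and the choice of $K_j$, an index $N_j$ (which I take increasing and with $N_j \ge j$) such that $\|f_n^{(j)} - f_m^{(j)}\|_{\Lone} \le 3 \cdot 2^{-j}$ for all $n, m \ge N_j$. Finally I take the diagonal $g_j := f_{N_j}^{(j)}$. Since $f_{N_{j+1}}^{(j+1)}$ is a convex combination of functions $f_i^{(j)}$ with $i \ge N_{j+1} \ge N_j$, each of which is within $3 \cdot 2^{-j}$ of $f_{N_j}^{(j)}$ in $L^1$, one gets $\|g_{j+1} - g_j\|_{\Lone} \le 3 \cdot 2^{-j}$; as this bound is summable, $(g_j)$ is $L^1$-Cauchy and converges by completeness of $L^1(\Omega)$. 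Moreover $g_j \in \conv(f_{N_j}, f_{N_j + 1}, \dots) \subseteq \conv(f_j, f_{j+1}, \dots)$, since a convex combination of convex combinations of a tail is again a convex combination of that tail.

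The one genuinely delicate point is that truncation does not commute with forming convex combinations, so one cannot simply apply the Hilbertian lemma to a set that is at once convex and bounded in $L^2$. The fix --- apply the Hilbertian lemma to the truncated sequence but record the weights and re-apply them to the untruncated functions, absorbing the discrepancy via uniform integrability --- is the heart of the ``straightforward truncation procedure''; the rest is the diagonal extraction and the summable error bookkeeping described above.
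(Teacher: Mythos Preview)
Your argument is correct and follows the same strategy as the paper: reduce to the Hilbertian Komlos lemma by truncation, and use uniform integrability together with a diagonal argument to pass back to $L^1$. The only difference is organizational --- the paper truncates the \emph{original} functions $f_n$ at each level $i$ (setting $f_n^{(i)}:=f_n\I_{\{|f_n|\le i\}}$) and diagonalizes over truncation levels to produce a single system of convex weights that works for all levels simultaneously, whereas you truncate the running convex combinations and diagonalize over successively refined sequences --- but this is a variation in bookkeeping, not in substance.
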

\begin{proof}
For $i,n\in\N$ set $f_{n}^{(i)}:=f_n \I_{\{|f_n|\leq i\}}$ such that $f_{n}^{(i)}\in \Ltwo$. 

We claim that there exist
for every $n$ convex weights $\lambda_n^{n}, \ldots, \lambda_{N_n}^{n}$ such that the functions
$ \lambda_n^{n} f_n^{(i)} + \ldots+\lambda_{N_n}^{n} f_{N_n}^{(i)}$
converge in $\Ltwo$ for every $i\in\N$. 

To see this, one first uses the Hilbertian lemma to find convex weights $\lambda_n^{n}, \ldots, \lambda_{N_n}^{n}$ such that   
$( \lambda_n^{n} f_n^{(1)} + \ldots+\lambda_{N_n}^{n} f_{N_n}^{(1)})_{n\geq 1}$ converges. In the second step, one applies the lemma to the sequence 
$( \lambda_n^{n} f_n^{(2)} + \ldots+\lambda_{N_n}^{n} f_{N_n}^{(2)})_{n\geq 1} $, to obtain convex weights which work for the first two sequences. Repeating this procedure inductively we obtain sequences of convex weights which work for the first $m$ sequences.  
Then a standard diagonalization argument yields the claim.

By uniform integrability, $\lim_{i\to \infty}\| f^{(i)}_n- f_n\|_1=0$, uniformly with respect to $n$.  
Hence, once again, uniformly with respect to $n$, 
$$ \textstyle\lim_{i\to\infty}\|  (\lambda_n^{n} f_n^{(i)} + \ldots+\lambda_{N_n}^{n} f_{N_n}^{(i)})-(\lambda_n^{n} f_n + \ldots+\lambda_{N_n}^{n} f_{N_n})\|_1= 0.$$ 
Thus $(\lambda_n^{n} f_n + \ldots+\lambda_{N_n}^{n} f_{N_n})_{n\geq 1}$  is a Cauchy sequence in $\Lone$.
 \end{proof}


\subsection{Uniform integrability of the discrete approximations.}\label{UIS}
\begin{lemma}\label{UIlemma}
The sequence $(M^n_1)_{n\geq1}$ is uniformly integrable.
\end{lemma}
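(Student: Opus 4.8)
The plan is to prove that $(A^n_1)_{n\ge1}$ is uniformly integrable; since $S_1\in\Lone$ and $M^n_1=S_1-A^n_1$, uniform integrability of $(M^n_1)_{n\ge1}$ follows immediately (the $\Lone$-function $S_1$ is a singleton, hence uniformly integrable family, and a sum of uniformly integrable sequences is uniformly integrable). First I would record two elementary facts. By the submartingale property each increment $A^n_t-A^n_{t-1/2^n}=\E[S_t-S_{t-1/2^n}\mid\F_{t-1/2^n}]$ is nonnegative, so $A^n$ is increasing and $A^n_1\ge A^n_\tau\ge0$ for any $\tau$. Moreover $\E[A^n_1]=\E[S_1]-\E[M^n_1]=\E[S_1]-\E[M^n_0]=\E[S_1]-\E[S_0]=:a$ is a finite constant independent of $n$; in particular $(A^n_1)_{n\ge1}$ is bounded in $\Lone$ and, by Markov's inequality, $\P[A^n_1>c]\le a/c$ uniformly in $n$.

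Next, for $c>0$ introduce the stopping time $\tau=\tau^n_c:=\inf\{t\in\D_n: t<1,\ A^n_{t+1/2^n}>c\}$, with the convention $\inf\emptyset:=1$. This is indeed a stopping time for $(\F_t)_{t\in\D_n}$ because predictability of $A^n$ makes $A^n_{t+1/2^n}$ measurable with respect to $\F_t$. Since $A^n$ is increasing one checks directly that $A^n_\tau\le c$ always, that $\{\tau<1\}=\{A^n_1>c\}$, and that $A^n_1-A^n_\tau$ vanishes on $\{\tau=1\}$. The key step is the standard truncation estimate: on $\{A^n_1>2c\}$ one has $A^n_\tau\le c\le\tfrac12 A^n_1$, hence $A^n_1\le 2(A^n_1-A^n_\tau)$, and therefore
\[ \E[A^n_1\,\I_{\{A^n_1>2c\}}]\ \le\ 2\,\E[(A^n_1-A^n_\tau)\,\I_{\{A^n_1>c\}}]\ =\ 2\,\E[A^n_1-A^n_\tau], \]
the last equality because $\{A^n_1>c\}=\{\tau<1\}$ and the integrand is $0$ on $\{\tau=1\}$. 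Applying optional stopping to the discrete martingale $M^n$ at the bounded stopping time $\tau$ gives $\E[A^n_1-A^n_\tau]=\E[S_1-S_\tau]=\E[(S_1-S_\tau)\,\I_{\{\tau<1\}}]$.

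Combining the two displays,
\[ \E[A^n_1\,\I_{\{A^n_1>2c\}}]\ \le\ 2\,\E[|S_1|\,\I_{\{\tau^n_c<1\}}]+2\,\E[|S_{\tau^n_c}|\,\I_{\{\tau^n_c<1\}}]. \]
Here the class $D$ hypothesis enters: the family $\{S_\sigma:\sigma\text{ a stopping time}\}$ is uniformly integrable, so (by the usual consequence of uniform integrability) for every $\eps>0$ there is $\delta>0$ with $\E[|S_\sigma|\,\I_E]\le\eps$ whenever $\P[E]<\delta$, simultaneously for all stopping times $\sigma$; the same absolute-continuity property holds trivially for the single function $S_1$. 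Since $\P[\tau^n_c<1]=\P[A^n_1>c]\le a/c<\delta$ for $c$ large enough, uniformly in $n$, we conclude $\sup_{n}\E[A^n_1\,\I_{\{A^n_1>2c\}}]\le 4\eps$ for such $c$, which proves that $(A^n_1)_{n\ge1}$, and hence $(M^n_1)_{n\ge1}$, is uniformly integrable.

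The one point that requires care — and is really the heart of the argument — is the uniformity in $n$: one must notice that the bad event $\{\tau^n_c<1\}$ has probability at most $a/c$ irrespective of $n$ (this is exactly where the constancy of $\E[A^n_1]$ is used), and that class $D$ controls $\E[|S_{\tau^n_c}|\,\I_E]$ in terms of $\P[E]$ alone, uniformly over the whole family of stopping times $\tau^n_c$. The remaining ingredients — nonnegativity and predictability of $A^n$, the truncation trick, and optional stopping over a finite index set — are routine.
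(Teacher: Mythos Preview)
Your proof is correct and follows essentially the same approach as the paper: both use the same predictable stopping time $\tau^n_c$, optional stopping to convert $\E[A^n_1-A^n_{\tau^n_c}]$ into an expression involving $S$, the uniform bound $\P[\tau^n_c<1]\le a/c$ from Markov's inequality, and the class~$D$ hypothesis to conclude. The only organizational differences are that the paper first reduces to $S_1=0$, $S\le 0$ and handles the truncation via two levels $c$ and $c/2$, whereas your inequality $A^n_1\le 2(A^n_1-A^n_{\tau^n_c})$ on $\{A^n_1>2c\}$ reaches the same endpoint slightly more directly.
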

\begin{proof}
Subtracting $\E[S_{1}|\F_t]$ from $S_t$ we may assume that $S_1=0$ and $S_t \leq 0$ for all $0 \leq t \leq 1$.
Then $M_{1}^n=-A_{1}^n,$ and for every $(\F_t)_{t\in\D_n}$-stopping time $\tau$
\begin{align}\label{eq0}
S_{\tau}^n=-\E[A_{1}^n|\F_{\tau}]+ A_{\tau}^n.
\end{align}
We claim that $(A_{1}^n)_{n=1}^{\infty}$ is uniformly integrable. 
For $c>0$, $n \geq 1$ define  
$$\tau_n(c)=\inf \big \{ (j-1)/{2^n}: A^n_{{j}/{2^n}} > c \big\} \wedge 1 .$$
 From $A_{\tau_{n}(c)}^n \leq c$ and \eqref{eq0} we obtain
 $ S_{\tau_{n}(c)} \leq -E[A_{1}^n|\F_{\tau_{n}(c)}]+c.$
Thus, 
$$\int_{\{A_{1}^n >c \}} A_{1}^n \,d\P=\displaystyle\int_{\{ \tau_n(c)<1\}}\E[A_{1}^n|\F_{\tau_{n}(c)}] \,d\P
 \leq c \, \p\big[ \tau_{n}(c) <1 \big]- \int_{ \{ \tau_n(c) <1 \} } S_{\tau_{n}(c)} \, d\P.$$
Note $\{\tau_n(c) <1\} \subseteq \{\tau_n(\frac{c}{2}) <1\},$ hence, by \eqref{eq0}
\begin{align*}
 \int_{{\{\tau_n(\frac{c}{2}) <1\} }}\!\!\!\!\!\! - S_{\tau_{n}(\frac{c}{2})}\,d\P &=
\int_{\{\tau_n(\frac{c}{2}) <1\} }\!\!A_{1}^n-A^n_{\tau_{n}(\frac{c}{2})} \, d\P  \\
& \geq
\int_{\{\tau_n(c) <1\} }\!\!A_{1}^n-A^n_{\tau_{n}(\frac{c}{2})} \, d\P   \geq
\frac{c}{2} \, \p[\tau_n(c) <1]. 
\end{align*}
Combining the above inequalities we obtain
\begin{align}\label{eq1}
\int_{ \{A_{1}^n >c \} } A_{1}^n \,d\P \leq -2 \int_{\{\tau_n(\frac{c}{2}) <1\} } S_{\tau_{n}(\frac{c}{2})} \, d\P- 
\int_{ \{ \tau_n(c) <1 \} } S_{\tau_{n}(c)} \, d\P. 
\end{align}
On the other hand
\begin{align*}
\p[\tau_n(c)<1]&=  \p[A_{1}^n>c] 
\leq {\E[A_{1}^n]}/{c} = {-\E[M_{1}^n]}/{c}
  = {-\E[S_{0}]}/{c}, 
\end{align*}
hence, as $c\to\infty$, $\p[\tau_n(c)<1]$ goes to $0$, uniformly in $n$.
As $S$ is of class $D$, \eqref{eq1} implies that the sequence $(A_{1}^n)_{ n\geq 1}$ is uniformly integrable and 
hence $(M_{1}^n)_{ n\geq 1}=(S_{1}-A^n_1)_{ n\geq 1}$ is uniformly integrable as well.\end{proof}
\subsection{The limiting procedure.}\label{LIS}

For each $n$, extend $M^n$ to a (c\`adl\`ag) martingale on $[0,1]$ by setting $M_t^n:=\E[M_1^n|\F_t]$.
By Lemma \ref{EasyKomlos} and Lemma \ref{UIlemma} there exist $M \in \Lone$  and for each $n$ convex weights $\lambda_n^{n}, \ldots, \lambda_{N_n}^{n}$ such that with  
\begin{align} \label{weights2}
 \M^n:=\lambda_{n}^{n} M^{n} + \ldots+\lambda_{N_n}^{n}  M^{N_n}
\end{align}
we have $\M^n_1\to M$ in $\Lone.$ Then, by Jensen's inequality, 
$\M^n_t\to M_t:=\E[M|\F_t]$  for all $t\in[0,1].$
 For each $n\geq 1$ we extend $A^n$ to $[0,1]$ by
 \begin{align}
 A^n&:=\textstyle{\sum_{t\in \D_n} A^{n}_t\I_{(t-1/2^n, t ] } }\\
 \mbox{and set }\quad \A^n&:=\lambda_{n}^{n} A^{n} + \ldots+\lambda_{N_n}^{n}  A^{N_n},
 \end{align} 
 where we use the same convex weights as in \eqref{weights2}. 
Then the c\`adl\`ag process $$(A_t)_{\1}:=(S_t)_{\1}-(M_t)_{\1}$$
satisfies for every $t\in\D$ 
$$\A_t^n=(S_t-\M_t^n)\ \to\ (S_t-M_t)=A_t\quad \mbox{in $\Lone$.}$$ 
 Passing to a subsequence which we denote again by $n$, we obtain that convergence holds also almost surely. Consequently, $A$ is almost surely increasing on $\D$ and, by right continuity,  also on $[0,1]$.

As the  processes ${A}^n$ and $\A^n$ are left-continuous and adapted, they are predictable. 
To obtain that $A$ is predictable, we show that for a.e.\ $\omega$ and every $t \in [0,1]$
\begin{equation} \label{pred1}
 \textstyle \limsup_{n} \A_{t}^n(\omega)=A_{t}(\omega).
\end{equation}
If  $f_n,f:[0,1]\to \RR$ are increasing functions such that $f$ is right continuous and $ \lim_{n } f_n(t) = f(t)$ for $t\in\D,$ then
\begin{align}\label{Aleq}
\textstyle{\limsup_{n }} f_n (t)\leq f(t) \ \mbox {for all $t\in[0,1]$ and }\\
\textstyle{\lim_{n }} f_n(t) = f(t) \mbox { if $f$ is continuous at $t$.} \label{Asame}
\end{align}
Consequently, \eqref{pred1} can only be violated at discontinuity points of $A.$ As $A$ is c\`adl\`ag, every path of $A$ can
have only finitely many jumps larger  than  $1/k$ for $k\in\N$. It follows that the points of discontinuity of $A$ can be
exhausted by a countable sequence of stopping times, and therefore it is sufficient to prove 
$\limsup_{n } \A^n_{\tau}=A_{\tau}$ for every stopping time $\tau.$

By \eqref{Aleq}, $\limsup_{n } \A^n_{\tau}\leq A_{\tau}$ and as $\A_{\tau}^n \leq \A_{1}^n \to A_{1} \mbox{ in }  \Lone$ 
we deduce from Fatou's Lemma  that
$$\textstyle{\liminf_{n }\E \big[{A}_{\tau}^n \big] \leq \limsup_{n } \E \big[\A_{\tau}^n \big] \leq \E\big[ \limsup_{ n} \A_{\tau}^n \big] \leq \E\big[A_{\tau}\big].}$$ 
Therefore it suffices to prove $\lim_{n  } \E[{A}_{\tau}^n]=\E[A_{\tau}].$
For $n \geq 1$ set $$\sigma_{n}:=\inf\{t\in \D_n: t\geq \tau\}.$$ 
Then $ {A}_{\tau}^n=A^{n}_{\sigma_{n}}$  and $\sigma_n \downarrow \tau$. 
Using that $S$ is of class $D$, we obtain
$$ \E[{A}_{\tau}^n]=\E[A_{\sigma_{n}}^n]=\E [S_{\sigma_{n}}] -\E[M_{0}]
\to \E[S_{\tau}]-\E[M_{0}]=\E[A_{\tau}].$$
 
\bibliographystyle{alpha}
\bibliography{Bezirgen}

\end{document}